\newtheorem{theorem}{Theorem}[section]
\newtheorem{cor}{Corollary}[section]
\newtheorem{defn}{Definition}[section]
\newtheorem{remark}{Remark}[section]
\newtheorem{ex}{Example}[section]
\newcommand{\be}{\begin{equation}}
\newcommand{\ee}{\end{equation}}
\newcommand{\RR}{\mathbb{R}}
\newcommand{\CC}{\mathbb{C}}
\newcommand{\mnr}{\mathbb{R}^{n\times n}}
\renewcommand{\bar}{\overline}
\newfont{\bb}{msbm10}
\renewcommand{\l}{\langle}
\renewcommand{\r}{\rangle}
\newcommand{\NCONF}{\mbox{\bf NCON1}}
\newcommand{\NCONS}{\mbox{\bf NCON2}}
\begin{document}
	\title{How to Detect and Construct N-matrices}
	\author{Projesh Nath Choudhury\thanks{Department of Mathematics,
			Indian Institute of Science, Bengaluru, India
			 (projeshnc@alumni.iitm.ac.in, projeshc@iisc.ac.in).}  		\and
		Michael J. Tsatsomeros\thanks{
		Department of Mathematics and Statistics, 
		Washington State University,
		Pullman, WA 99164, USA
		(tsat@wsu.edu).}}
	\maketitle

\begin{abstract}
N-matrices are real $n\times n$ matrices all of whose principal 
minors are negative. We provide (i) an $O(2^n)$ test to detect whether or 
not a given matrix is an N-matrix, and (ii) a characterization 
of N-matrices, leading to the recursive construction of every N-matrix.
\end{abstract}

{\bf AMS Subject Classification:}
15A48, 
15A15, 
15A23, 
68Q15,  
90C33.  

{\bf Keywords.} N-matrix, P-matrix, almost P-matrix, Principal submatrix, 
                Principal minor, Schur complement.

\newpage
\section{Introduction and Motivation}
\label{intro}
This work concerns N-matrices, that is, real $n\times n$ matrices, $A\in\mnr$, all of 
whose principal minors are negative. 

In prior discussions of N-matrices, their resemblance to P-matrices, 
which are matrices all of whose principal minors are positive, invariably 
comes up first. Indeed, P-matrices are widely studied since they contain 
several prominent classes of matrices, such as the positive 
definite matrices and the M-matrices; they find applications in 
mathematical programming, the study of univalence and complexity theory 
(see e.g., \cite{bepl:94, JST, MP}). 
N-matrices find similar applications and possess properties analogous to P-matrices; 
they were introduced in \cite{I} and have been studied e.g., in \cite{M, PR, ST},
along with special types of N-matrices in \cite{GAJ1, GAJ2}.

Among the motivating factors for studying N-matrices (see also the conclusions
in Section \ref{conclusions}) is their connection 
to univalence (injectivity of differential maps in $\RR^n$) 
and their role in the Linear Complementarity Problem (see Section \ref{back}). 
In addition, as it is evident in the existing theory of N-matrices and will be 
reinforced by the results herein, it is illuminating to identify and compare the 
effects of having signed principal minors in the two cases of N-matrices 
and P-matrices. There are similarities, distinctions, but also some 
unexpected connections between the two classes. Such instances will 
surface in our study of how to 
(i) detect N-matrices efficiently (Section \ref{test}), and 
(ii) construct all the N-matrices (Section \ref{con}).

Some background material and basic properties of N-matrices 
are reviewed in Section \ref{back}, which will help us develop and appropriately 
frame the results. Matlab implementations of algorithms for the detection of 
N-matrices and P-matrices are included in Section \ref{matlab} for the 
reader's convenience.

\section{Background, Notation and Context}
\label{back}

For a positive integer $n$, let $\l n\r=\{1,2,\ldots,n\}$.
For $\alpha\subseteq\l n\r$, $|\alpha|$ denotes the cardinality of $\alpha$
and $\bar\alpha=\l n\r\setminus\alpha$. For $\alpha\subseteq\l n\r$ with
$|\alpha|=k$ and its elements arranged in ascending order, we let $x[\alpha]$ 
denote the vector in $\CC^k$ obtained from the entries of $x\in\CC^n$ 
indexed by $\alpha$. 
Moreover, we let $A[\alpha,\beta]$ denote the submatrix of $A\in\mnr$ whose rows and
columns are indexed by $\alpha,\beta\subseteq \l n\r$, respectively;
the elements of $\alpha,\beta$ are assumed to be in ascending order.
When a row or column index set is empty, the corresponding submatrix
is considered vacuous and by convention has determinant equal to 1.
We abbreviate $A[\alpha,\alpha]$ by $A[\alpha]$ and refer to it as a 
{\em principal submatrix} of $A$ and its determinant as a {\em principal minor} of $A$.

Given $A\in\mnr$ and $\alpha\subseteq\l n\r$ such that $A[\alpha]$ is
invertible, $A/A[\alpha]$ denotes the {\em Schur complement of} $A[\alpha]$
{\em in} $A$, that is, 
\[  A/A[\alpha]\,=\, A[\bar\alpha]\, -\, A[\bar\alpha,\alpha]A[\alpha]^{-1}
A[\alpha,\bar\alpha]. \] 

\begin{defn}
{\rm 
Matrix $A=[a_{ij}]\in\mnr$ is
\begin{itemize}
\item 
an {\em N-matrix} if $\det A[\alpha]<0$ for all $\alpha\subseteq\l n\r$;
\item 
a {\em P-matrix} if $\det A[\alpha]>0$ for all $\alpha\subseteq\l n\r$;
\item 
an {\em almost P-matrix} if $\det A[\alpha]>0$ for all proper $\alpha\subseteq\l n\r$
and $\det A< 0$.
\end{itemize}
We further classify an N-matrix $A\in\mnr$ as being 
\begin{itemize}
\item 
{\em of the first category} if there exist $i,j\in\l n\r$ such that $a_{ij}> 0$; or
\item 
{\em of the second category} if $a_{ij}<0$ for all $i,j\in\l n\r$;\end{itemize}
}
\end{defn}

Array inequalities in the sequel are meant to be entrywise.

For reference and context needed in our further considerations, we gather below some 
analogous properties of N-matrices and P-matrices.  
For the definition and background on the Linear Complementarity problem LCP$(A,q)$,
$A\in\mnr$, $q\in\RR^n$, see \cite{CPS}. For the definition and properties of 
the Principal Pivot Transform ppt$(A, \alpha)$, $A\in\mnr, \alpha\subseteq\l n\r$,
see \cite{T}.

\underline{\bf N-matrices:}
\begin{itemize}
	\item{\bf [N1]} 
	$A\in\mnr$ is an N-matrix if and only if $A^{-1}$ is an almost P-matrix \cite{OPR}.
	\item{\bf [N2]} 
	$A\in\mnr$ is an N-matrix of the second category if and only if for every $q>0$, 
	LCP$(A,q)$ has exactly $2$ solutions \cite{PR}.
	\item{\bf [N3]}
	$A\in\mnr$ is an N-matrix of the second category if and only if
	      $A$ does not reverse the sign of any nonzero, unisigned vector 
	      $x=[x_i]\in\RR^n$;
	      i.e., $(Ax)_i\, x_i \leq 0$ for all $i\in\l n\r$ 
	      implies $x\geq 0$ or $x\leq 0$ \cite{PR}.
	\item{\bf [N4]} 
	If $A\in\mnr$ is an N-matrix, then $A/A[\alpha]$ is a P-matrix
	      for all proper subsets $\alpha$ of $\l n\r$ \cite{ST}.
	\item{\bf [N5]}
	Let $A\in\mnr$ be an N-matrix, $\alpha$ be a proper 
	subset of $\l n\r$ and $B = {\rm ppt}(A,\alpha)$.
	Then $\det(B[\alpha]) < 0$ and all other principal minors of $B$ are positive
	\cite{ST}.
	\item{\bf [N6]} 
	N-matrices have exactly one real negative eigenvalue \cite{PR}.
\end{itemize}

\underline{\bf P-matrices:} \
See \cite[Chapter 3]{JST} or \cite{survey} for a treatment of P-matrices.
\begin{itemize}
	\item{\bf [P1]}
	 $A\in\mnr$ is a P-matrix if and only if $A^{-1}$ is a P-matrix.
	\item{\bf [P2]}
	$A\in\mnr$ is a P-matrix if and only if for every $q\in\RR^n$, 
	LCP$(A,q)$ has a unique solution.
	\item{\bf [P3]}
	$A\in\mnr$ is a P-matrix if and only if
	$A$ does not reverse the sign of any nonzero vector  $x=[x_i]\in\RR^n$;
	i.e., $(Ax)_i\, x_i \leq 0$ for all $i\in\l n\r$ implies $x= 0$.
	\item{\bf [P4]}
	If $A\in\mnr$ is a P-matrix, then $A/A[\alpha]$ is a P-matrix 
	      for all $\alpha\subseteq\l n\r$.
	\item{\bf [P5]}
	$A\in\mnr$ is a P-matrix if and only if ppt$(A,\alpha)$ is a P-matrix
	      for any (and thus all) $\alpha\subseteq\l n\r$.
	\item{\bf [P6]} 
	P-matrices have no real negative eigenvalues.
\end{itemize}

\section{Detecting N-matrices}
\label{test}
The problem of detecting P-matrices is known to be co-NP-complete \cite{Coxson}.
The computation of all the principal minors of $A\in\mnr$ via row reduction leads 
to an $O(n^3\,2^n)$ effort. A more efficient (but still exponential) algorithm 
to compute all principal minors of a square matrix is developed in \cite{GT1}; 
the inverse problem of constructing a matrix from a feasible set of principal minors 
is solved in \cite{GT2}. In the same vein, an efficient, recursive 
algorithm to detect P-matrices of $O(2^n)$ time complexity is 
developed in \cite{TsatLei}. These algorithms are based on the following theorem.
  
\begin{theorem} {\rm \cite[Theorem 3.1]{TsatLei}}
	\label{Ptest}
	Let $A\in\mnr$ and $\alpha \subseteq \l n\r$ with 
	$\lvert \alpha \rvert =1$. Then $A$ is a P-matrix if and only if 
	$A[\alpha],~A[\bar\alpha]$ and $A/A[\alpha]$ are P-matrices.
\end{theorem}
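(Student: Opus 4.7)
The plan is to prove the two directions separately, using the fact that principal submatrices of P-matrices are P-matrices together with the classical Schur determinantal identity.

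For the forward implication I would just unpack the definitions: principal submatrices of a P-matrix are clearly P-matrices (their principal minors are also principal minors of $A$), so $A[\alpha]$ and $A[\bar\alpha]$ are P-matrices; and $A/A[\alpha]$ is a P-matrix by property [P4].

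For the converse, let $\beta\subseteq\l n\r$ be arbitrary; the goal is to show $\det A[\beta]>0$. Since $|\alpha|=1$, exactly one of two things happens: either $\alpha\subseteq\beta$ or $\alpha\cap\beta=\emptyset$. In the second case $\beta\subseteq\bar\alpha$, so $A[\beta]$ is a principal submatrix of the P-matrix $A[\bar\alpha]$, giving $\det A[\beta]>0$ immediately. In the first case I would invoke the Schur determinant formula
\[
\det A[\beta] \;=\; \det A[\alpha]\cdot\det\bigl(A[\beta]/A[\alpha]\bigr),
\]
which is applicable because $A[\alpha]$ is a $1\times 1$ P-matrix and hence nonzero. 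The first factor is positive because $A[\alpha]$ is a P-matrix. For the second factor, I would use the standard identification
\[
A[\beta]/A[\alpha] \;=\; \bigl(A/A[\alpha]\bigr)[\beta\setminus\alpha],
\]
so that $\det(A[\beta]/A[\alpha])$ is a principal minor of the P-matrix $A/A[\alpha]$ (indexed by $\beta\setminus\alpha\subseteq\bar\alpha$) and is therefore positive. Combining the two gives $\det A[\beta]>0$, completing the proof.

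The only step that is more than bookkeeping is the identification of $A[\beta]/A[\alpha]$ with a principal submatrix of $A/A[\alpha]$. This is a standard consequence of the block structure of Schur complements, but it is the one spot where I would pause to say a word of justification (or cite it) before moving on. Everything else is a clean case split driven by the hypothesis $|\alpha|=1$, which is exactly what forces the dichotomy $\alpha\subseteq\beta$ or $\alpha\cap\beta=\emptyset$ and makes the argument go through without any further subcases.
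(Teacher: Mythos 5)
Your proof is correct and follows essentially the same route the paper takes: the paper does not reprove this result (it cites \cite{TsatLei}), but its proof of the N-matrix analogue, Theorem~\ref{Ntest}, is exactly your argument --- forward direction via heredity of principal submatrices and property [P4], converse via the dichotomy $\alpha\subseteq\beta$ or $\alpha\cap\beta=\emptyset$, the identification $A[\beta]/A[\alpha]=(A/A[\alpha])[\beta\setminus\alpha]$, and the Schur determinant formula. No gaps; the one step you flag for justification is the same standard identification the paper also uses without further comment.
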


We can extend the theorem above into the following characterization of N-matrices. 

\begin{theorem}\label{Ntest}
	Let $A\in\mnr$ and $\alpha \subseteq \l n\r$ with 
	$\lvert \alpha \rvert =1$. Then $A$ is an N-matrix if and only if 
	$A[\alpha],~A[\bar\alpha]$ are N-matrices and $A/A[\alpha]$ is a P-matrix.
\end{theorem}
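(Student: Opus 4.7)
The plan is to mirror the structure of the proof of Theorem~\ref{Ptest}, exploiting the Schur determinantal identity together with the distinction between principal submatrices that do versus do not contain the index in $\alpha$. Write $\alpha=\{k\}$, so $A[\alpha]=[a_{kk}]$.

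For the forward direction, suppose $A$ is an N-matrix. Then every principal minor of $A[\bar\alpha]$ is a principal minor of $A$, hence negative, so $A[\bar\alpha]$ is an N-matrix; similarly $a_{kk}<0$, so $A[\alpha]$ is (trivially) an N-matrix. Finally, since $\alpha$ is a proper subset of $\langle n\rangle$, property [N4] gives that $A/A[\alpha]$ is a P-matrix. This direction is essentially bookkeeping.

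For the converse, assume $A[\alpha]$ and $A[\bar\alpha]$ are N-matrices and $A/A[\alpha]$ is a P-matrix. I must verify $\det A[\beta]<0$ for every nonempty $\beta\subseteq\langle n\rangle$. Split into two cases. If $k\notin\beta$, then $\beta\subseteq\bar\alpha$ and $A[\beta]$ is a principal submatrix of the N-matrix $A[\bar\alpha]$, so $\det A[\beta]<0$. If $k\in\beta$, write $\beta=\{k\}\cup\gamma$ with $\gamma\subseteq\bar\alpha$. The key identity
\[
\det A[\beta]\;=\;\det A[\alpha]\cdot\det\bigl(A[\beta]/A[\alpha]\bigr)\;=\;a_{kk}\cdot\det\bigl((A/A[\alpha])[\gamma]\bigr)
\]
reduces the sign analysis to two known quantities: $a_{kk}<0$ because $A[\alpha]$ is an N-matrix, and $\det((A/A[\alpha])[\gamma])>0$ because $A/A[\alpha]$ is a P-matrix (using the convention that an empty principal minor equals $1$ when $\gamma=\emptyset$, i.e.\ $\beta=\{k\}$). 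Multiplying gives $\det A[\beta]<0$.

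The one nontrivial ingredient --- and really the main obstacle to making the argument airtight --- is the ``quotient'' identity $A[\beta]/A[\alpha]=(A/A[\alpha])[\gamma]$ for $\alpha\subseteq\beta$, i.e.\ that Schur complementation commutes with taking principal submatrices indexed within $\bar\alpha$. This is a standard consequence of the block formula for the Schur complement and can be invoked directly, as it was used implicitly in establishing [N4] and [P4]. With that identity in hand, the case split above completes the proof, and the theorem's only genuine asymmetry with Theorem~\ref{Ptest} --- that $A/A[\alpha]$ is a P-matrix rather than an N-matrix --- is accounted for precisely by the negative sign contributed by $a_{kk}$.
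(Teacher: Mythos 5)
Your proposal is correct and follows essentially the same route as the paper: both directions match the paper's proof, with the converse handled by the same case split and the identity $\det A[\beta]=a_{kk}\det\bigl((A/A[\alpha])[\gamma]\bigr)$, which the paper states in the form ``$B/C$ is a principal submatrix of $A/A[\alpha]$ and $\det(B)=A[\alpha]\det(B/C)$.'' You merely make explicit (the quotient property of Schur complements and the vacuous case $\beta=\{k\}$) what the paper leaves implicit.
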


\begin{proof}
	Without loss of generality, let $\alpha=\{1\}$; otherwise, our considerations apply to 
	a permutation similarity of $A$. Suppose that $A$ is an N-matrix. 
	By definition, $A[\alpha]$ and $A[\bar\alpha]$ are N-matrices. 

By {\bf [N4]}, $A/A[\alpha]$ is a P-matrix.
		
Conversely, suppose $A[\alpha]$ and $A[\bar\alpha]$ are N-matrices and $A/A[\alpha]$ is 
a P-matrix. The determinant of any principal submatrix of $A$ without any entries 
from the first column is a principal minor of $A[\bar\alpha]$ and it is thus negative. 
Let $B$ be any principal submatrix of $A$ with entries from the first column of $A$. 
Then $C=A[\alpha]$ is a principal submatrix (diagonal entry) of $B$, 
and $B/C$ is a principal submatrix of $A/A[\alpha]$. 
Thus $\det(B/C)>0$ and so $\det(B)=A[\alpha] \det(B/C)<0$. 
Hence $A$ is an N-matrix.
\end{proof}

Theorem \ref{Ntest} suggests the following recursive algorithm 
for detecting N-matrices.

	ALGORITHM $N(A)$
	\begin{enumerate}
		\item Input $A=[a_{ij}]\in\mnr$
		\item If $a_{11}\geq 0$, output ``A is not an N-matrix'' stop
		\item Compute $A/a_{11}$
		\item If $A/a_{11}$ is not P-matrix output ``A is not an N-matrix'' stop
		\item Call $N(A[\bar{\{1\}}])$
		\item Output ``$A$ is an N-matrix''
	\end{enumerate}

A Matlab implementation of algorithm $N(A)$ is found in Section \ref{matlab} (NTEST). 
The algorithm needed in (step 4) of NTEST to detect a P-matrix is based on Theorem 
\ref{Ptest} and also provided in Section \ref{matlab} (PTEST).
   

\section{Constructing All N-matrices}
\label{con}
Examples of N-matrices, even of special structure and form, are not as
easy to generate as is for examples of P-matrices.  Some possibilities include 
the types of N-matrices considered in \cite{Fan} and \cite{GAJ1, GAJ2},
as well as the totally negative matrices (all minors are negative) in \cite{CRU,FV}.
In \cite[Theorem 7.12]{CKS}, some necessary conditions are presented on the 
signs of the entries of an N-matrix of the first category.

In this section, using a recursion based on rank-one perturbations of N-matrices, 
we can reverse the steps of the recursive algorithm $N(A)$ that detects N-matrices 
and thus construct {\bf every} N-matrix of either category. This approach is based
on the following corollary of Theorem \ref{Ntest}.

\begin{cor}\label{Const_N_2}
Let $A\in\mnr$ be an N-matrix of the second category, $a\in\RR$ and 
let $x,y \in \RR^n$. Then the following are equivalent:
	\begin{itemize}
		\item[\rm (i)] $U=\begin{bmatrix}
		A & x\\y^T & a
		\end{bmatrix}$
		is an N-matrix of the second category.
		\item[\rm (ii)] $a,x,y<0$ and $\displaystyle{A-\frac{1}{a}xy^T}$ is a P-matrix.
	\end{itemize}
\end{cor}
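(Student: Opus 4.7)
The plan is to apply Theorem \ref{Ntest} directly to $U\in\RR^{(n+1)\times(n+1)}$ with the index choice $\alpha=\{n+1\}$. With this choice we have $U[\alpha]=a$ (the scalar bottom-right entry), $U[\bar\alpha]=A$, and the Schur complement is given by the standard formula
\[
U/U[\alpha]\;=\;A-x\,a^{-1}y^T\;=\;A-\tfrac{1}{a}xy^T,
\]
provided $a\neq 0$. Theorem \ref{Ntest} then translates into: $U$ is an N-matrix if and only if $a<0$ (that is, the $1\times 1$ matrix $[a]$ is an N-matrix), $A$ is an N-matrix, and $A-\tfrac{1}{a}xy^T$ is a P-matrix. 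Since $A$ is already assumed to be an N-matrix, this collapses to the two conditions ``$a<0$'' and ``$A-\tfrac{1}{a}xy^T$ is a P-matrix''.

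Next I would handle the ``second category'' qualifier, which by definition simply requires all entries to be strictly negative. The entries of $U$ consist of the entries of $A$ (already strictly negative, since $A$ is of the second category by hypothesis) together with the components of $x$, of $y$, and the scalar $a$. Consequently, $U$ is of the second category if and only if $a<0$ and $x,y<0$ entrywise. Combining this with the previous paragraph gives the desired equivalence: for (i)$\Rightarrow$(ii), the entrywise negativity of $U$ yields $a,x,y<0$, while Theorem \ref{Ntest} furnishes the P-matrix conclusion on $A-\tfrac{1}{a}xy^T$; for (ii)$\Rightarrow$(i), the hypothesis $a<0$ legitimizes forming the Schur complement and invoking Theorem \ref{Ntest} to conclude that $U$ is an N-matrix, and the signs $a,x,y<0$ together with $A$ being of the second category ensure that $U$ has all negative entries.

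I do not anticipate a genuine obstacle; the argument is essentially a bookkeeping exercise that packages Theorem \ref{Ntest} for the ``bordering'' situation. The only point that needs a moment's care is recognizing the Schur complement formula $U/U[\{n+1\}]=A-\tfrac{1}{a}xy^T$, which is immediate from the definition once one notes that $U[\bar\alpha,\alpha]=x$ and $U[\alpha,\bar\alpha]=y^T$. Everything else is just tracing the ``second category'' condition through both implications and confirming that $a\neq 0$ is automatic on both sides so that the Schur complement is well-defined.
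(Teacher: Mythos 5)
Your argument is correct and is exactly the route the paper intends: the corollary is stated as an immediate consequence of Theorem \ref{Ntest} applied to the bordered matrix $U$ with $\alpha$ the singleton indexing the new row/column, so that $U/U[\alpha]=A-\tfrac{1}{a}xy^T$, with the second-category condition handled by entrywise sign bookkeeping. Nothing in your write-up deviates from or falls short of the paper's (implicit) proof.
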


Corollary \ref{Const_N_2} allows us to recursively construct 
$n \times n~ (n\geq 2)$ N-matrices of the second category as follows.

ALGORITHM $\NCONS$

\begin{enumerate}
	\item Choose $A_1<0$
	\item For $i=1:{n-1}$, given the $i \times i\;$ N-matrix of the 
	second category $A_i$,
	\begin{itemize}
		\item[(a)] choose $a_{i}<0$ and $x^{(i)}, y^{(i)}\in \mathbb{R}^i$ 
		such that $x^{(i)}, y^{(i)}<0$ and
		$A_i - \frac{1}{a_i}x^{(i)} {y^{(i)}}^T$ is a P-matrix
		\item[(b)] construct the ${(i+1)} \times {(i+1)}$ matrix 
		$A_{i+1}=\begin{bmatrix}
		A_i & x^{(i)}\\ {y^{(i)}}^T & a_i
		\end{bmatrix}$
	\end{itemize}
	\item Output ``$A=A_n$ is an N-matrix of the second category''
\end{enumerate}

\begin{theorem}
	Every matrix constructed by $\NCONS$ is an N-matrix of the second category. 
	Conversely, every N-matrix of the second category can be constructed by $\NCONS$.
\end{theorem}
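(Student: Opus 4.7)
The plan is to prove both directions by induction on $n$, using Corollary \ref{Const_N_2} as the engine that turns the recursive construction into an equivalence at each level.

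For the forward direction, I would argue by induction on $i$ that each iterate $A_i$ produced by $\NCONS$ is an $i \times i$ N-matrix of the second category. The base case $i=1$ is immediate, since step 1 chooses $A_1<0$, which is trivially a $1 \times 1$ N-matrix of the second category. For the inductive step, assume $A_i$ is an N-matrix of the second category. Step 2(a) then selects $a_i<0$, $x^{(i)},y^{(i)}<0$, and ensures that $A_i - \frac{1}{a_i}x^{(i)}{y^{(i)}}^T$ is a P-matrix. These are exactly the conditions of part (ii) of Corollary \ref{Const_N_2}, so its equivalent condition (i) gives that $A_{i+1}$, built in step 2(b), is an $(i+1)\times(i+1)$ N-matrix of the second category. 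Iterating from $i=1$ up to $n-1$ yields the claim.

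For the converse, let $A$ be an arbitrary $n \times n$ N-matrix of the second category. I would show by (reverse) induction on $n$ that $A$ lies in the image of $\NCONS$. For $n=1$, $A$ is a single negative number and can be produced by step 1. For $n \geq 2$, partition
\[
A = \begin{bmatrix} A' & x \\ y^T & a \end{bmatrix},
\]
where $A' = A[\l n-1\r]$, $x = A[\l n-1\r,\{n\}]$, $y^T = A[\{n\},\l n-1\r]$, and $a = a_{nn}$. Because $A$ is of the second category, all its entries are negative, so $a<0$ and $x,y<0$; and $A'$, being a principal submatrix of $A$, is itself an $(n-1) \times (n-1)$ N-matrix whose entries are all negative, hence of the second category. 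Applying Corollary \ref{Const_N_2} in the direction (i)$\Rightarrow$(ii) to $A$ further yields that $A' - \frac{1}{a} x y^T$ is a P-matrix. By the inductive hypothesis, $A'$ can be produced by $\NCONS$ as some $A_{n-1}$; then choosing $a_{n-1}=a$, $x^{(n-1)}=x$, $y^{(n-1)}=y$ in step 2(a) at the final iteration reconstructs $A = A_n$.

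The only thing that needs genuine verification is the compatibility of step 2(a) with the converse: namely, that when we peel off the last row and column of an N-matrix of the second category, the resulting principal submatrix is still an N-matrix of the second category, and that the Schur-complement-style hypothesis on $A' - \frac{1}{a}xy^T$ is automatic. Both facts follow cleanly from Corollary \ref{Const_N_2} together with the observation that ``all entries negative'' is inherited by principal submatrices, so there is no real obstacle; the argument is essentially a bookkeeping unwinding of the recursion against the biconditional provided by the corollary.
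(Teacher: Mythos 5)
Your proof is correct and follows essentially the same route as the paper's: the forward direction is the (ii)$\Rightarrow$(i) half of Corollary \ref{Const_N_2} applied along the recursion, and the converse is an induction on $n$ that peels off the last row and column, notes that the leading principal submatrix is again an N-matrix of the second category, and uses the (i)$\Rightarrow$(ii) half to recover admissible choices of $a_{n-1}$, $x^{(n-1)}$, $y^{(n-1)}$. Your explicit remark that entrywise negativity (hence second-category membership) is inherited by principal submatrices is a detail the paper leaves implicit, but otherwise the two arguments coincide.
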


\begin{proof}
	By Corollary \ref{Const_N_2}, the sequence of matrices $A_{i+1}~(i=1,\ldots, {n-1})$ 
	constructed by $\NCONS$, including $A_{1}$, are N-matrices of the second category. To 
	prove the converse, we proceed by induction on the order of matrices. The statement is 
	trivial for $n=1$. Let $n\geq 2$ and suppose that every N-matrix of the second 
	category of order smaller than $n$ can be constructed by $\NCONS$. 
	Let $A \in \mnr$ be an N-matrix of the second category. 
	Then $A$ can be partitioned as 
	\begin{center}
		$A=\begin{bmatrix}
		A_{n-1} & u\\v^T & a
		\end{bmatrix},$
	\end{center}
where $A_{n-1} \in \mathbb{R}^{(n-1)\times (n-1)}$ is N-matrix of the second category, 
$u,v \in \mathbb{R}^{n-1}$ and $a\in \mathbb{R}$. By inductive hypothesis, $A_{n-1}$ can 
be constructed by $\NCONS$. Since $A$ is N-matrix of the second category, by Corollary 
\ref{Const_N_2}, $A_{n-1}/a=A_{n-1}-\frac{1}{a}uv^T$ is a P-matrix and $a,x,y<0$. Thus 
$A_n=A$ can be constructed by $\NCONS$ with the following choices:
\begin{center}
	$a_{n-1}=a,~x^{(n-1)}=u \hbox{ and }y^{(n-1)}=v$.
\end{center}
\end{proof}

To extend our construction methodology to N-matrices of the first category,
we recall the following result.

\begin{theorem} {\rm \cite[Theorem 7.9.4]{baprag}}
	\label{persimilarity}
Let $A$ be a N-matrix of the first category. Then there exists a 
permutation matrix $P$ such that 
\begin{equation}\label{1steqn}
	PAP^T=\begin{bmatrix}
	A_{11} & A_{12}\\A_{21} & A_{22}
	\end{bmatrix},
	\end{equation}
where $A_{11},~A_{22}<0$ are square matrices and $A_{12}, A_{21}>0$.
\end{theorem}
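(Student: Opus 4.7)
The plan is to recover the required block structure from the sign pattern of the off-diagonal entries of $A$. I would first analyze the $2\times 2$ principal minors: since $\det A[\{i,j\}]=a_{ii}a_{jj}-a_{ij}a_{ji}<0$ and $a_{ii},a_{jj}<0$, we have $a_{ij}a_{ji}>a_{ii}a_{jj}>0$, so each off-diagonal entry is nonzero and $a_{ij},a_{ji}$ share a common sign. Writing $p_i:=-a_{ii}>0$ and $s_{ij}:=a_{ij}a_{ji}>0$, this also produces the quantitative estimate $s_{ij}>p_ip_j$, which will be needed below.

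The main step is to show that the cyclic product $a_{ij}a_{jk}a_{ki}$ is negative for every triple of distinct indices. Since any $3\times 3$ principal submatrix of $A$ is itself an N-matrix, it suffices to treat $n=3$. Setting $T=a_{12}a_{23}a_{31}$, the companion cyclic product equals $a_{13}a_{21}a_{32}=s_{12}s_{13}s_{23}/T$, and Laplace expansion gives
\[
\det A=-p_1p_2p_3+\Bigl(T+\tfrac{s_{12}s_{13}s_{23}}{T}\Bigr)+p_1s_{23}+p_2s_{13}+p_3s_{12}.
\]
If $T>0$, then AM-GM combined with $s_{ij}>p_ip_j$ yields $T+s_{12}s_{13}s_{23}/T\ge 2\sqrt{s_{12}s_{13}s_{23}}>2p_1p_2p_3$, whence $\det A>p_1p_2p_3+p_1s_{23}+p_2s_{13}+p_3s_{12}>0$, contradicting that $A$ is an N-matrix. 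Hence $T<0$. I expect this to be the main obstacle, since it is the one place where a genuine inequality rather than a sign chase is required.

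With the cyclic product condition in hand, I would call an off-diagonal pair $\{i,j\}$ \emph{positive} if $a_{ij}>0$. The previous step says every triangle in $K_n$ contains an even number of positive edges, and a standard argument (fix a base vertex $v$ and place in $\alpha$ the vertex $v$ together with all $w$ for which $\{v,w\}$ is negative) shows the positive edges are exactly the cut edges of the resulting bipartition $\l n\r=\alpha\cup\bar\alpha$. The first-category hypothesis produces at least one positive pair, so both parts are nonempty. Taking $P$ to be the permutation matrix that relabels $\alpha$ as $\{1,\ldots,|\alpha|\}$, the block $A_{11}=A[\alpha]$ has negative diagonal (by N-matrixness) and negative off-diagonal (within-class by the bipartition), and likewise for $A_{22}=A[\bar\alpha]$, while $A_{12}=A[\alpha,\bar\alpha]$ and $A_{21}=A[\bar\alpha,\alpha]$ are entrywise positive, yielding the decomposition \eqref{1steqn}.
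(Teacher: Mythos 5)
Your argument is correct, and it is worth noting that the paper itself offers no proof of this statement: it is imported verbatim from Bapat--Raghavan \cite[Theorem 7.9.4]{baprag}, so there is no in-paper argument to match, and what you have written is a self-contained replacement for the citation. Your route — (i) from the $2\times 2$ principal minors, every off-diagonal pair $a_{ij},a_{ji}$ is nonzero of common sign with $s_{ij}=a_{ij}a_{ji}>p_ip_j$; (ii) the cyclic product $a_{12}a_{23}a_{31}$ of any $3\times 3$ principal submatrix is negative, via the expansion $\det A=-p_1p_2p_3+\bigl(T+s_{12}s_{13}s_{23}/T\bigr)+p_1s_{23}+p_2s_{13}+p_3s_{12}$ and AM--GM (the identity $a_{13}a_{21}a_{32}=s_{12}s_{13}s_{23}/T$ is legitimate since all off-diagonal entries are nonzero); (iii) the resulting triangle-parity condition on the edge signs of $K_n$ forces the positive edges to be exactly the cut of a bipartition, with both parts nonempty by the first-category hypothesis — checks out at each step, including the degenerate cases ($n=2$ has no triangles but the bipartition argument is vacuously fine, and $n=1$ cannot be of the first category). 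The sign-pattern analysis of first-category N-matrices in the cited literature (Bapat--Raghavan, going back to Parthasarathy--Ravindran) is in a similar spirit, but your AM--GM treatment of the $3\times 3$ case is a clean, elementary way to get the one genuinely quantitative step, and the reduction from $K_n$ to triangles via principal submatrices is exactly the right use of the N-matrix inheritance property. The only presentational suggestion is to state explicitly that the sign of the cyclic product equals the product of the three edge signs $\sigma_{ij}\sigma_{jk}\sigma_{ik}$ (using that $a_{ki}$ and $a_{ik}$ share a sign), which is the bridge from (ii) to the parity statement in (iii).
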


By Theorem \ref{persimilarity}, in order to construct all N-matrices 
of the first category of size $n\geq 2$, it is sufficient to construct them in the 
form \eqref{1steqn}, where $A_{11}\in \mathbb{R}^{k \times k}$  $(k<n)$.
This can be achieved using the following Corollary of Theorem \ref{Ntest}.
\begin{cor}\label{Const_N_1}
	Let $A=\begin{bmatrix}
	A_{11} & A_{12} \\ A_{21} & A_{22}
	\end{bmatrix}\in\mnr ~(n\geq 2)$ be an N-matrix, where $A_{11}\in \mathbb{R}^{k\times k} ~(k\leq n)$, $A_{22}\in \mathbb{R}^{{n-k}\times {n-k}}$ with $A_{11}, A_{22}<0$ and $A_{12},~A_{21}>0$. Let $a\in\RR$ and 
	let $x,y \in \RR^n$ . Then the following are equivalent:
	\begin{itemize}
		\item[\rm (i)] $U=\begin{bmatrix}
		A & x\\y^T & a
		\end{bmatrix}$
		is an N-matrix of the first category.
		\item[\rm (ii)] $a<0$, $A-\frac{1}{a}xy^T$ is a P-matrix,
		$x[\l k\r], y[\l k\r]>0$ and 
		$x[\bar{\l k\r}], y[\bar{\l k\r}]<0$.
	\end{itemize}
\end{cor}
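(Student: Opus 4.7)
The plan is to parallel the proof of Corollary \ref{Const_N_2}, with Theorem \ref{Ntest} (applied with $\alpha=\{n+1\}$) again supplying the Schur-complement reduction, and Theorem \ref{persimilarity} handling the first-category certification. The block-sign hypothesis on $A$ then constrains how $x$ and $y$ must look.

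For (ii)$\Rightarrow$(i), the hypotheses $a<0$, ``$A$ is an N-matrix'' (given) and ``$A-\tfrac{1}{a}xy^T$ is a P-matrix'' are exactly the three conditions of Theorem \ref{Ntest}, so $U$ is an N-matrix. To certify first category I would apply the permutation $P$ that lists the indices $\l k\r$ first, followed by $\overline{\l k\r}\cup\{n+1\}$. Written as a $2\times 2$ block matrix with blocks of sizes $k$ and $n+1-k$, the matrix $PUP^T$ has diagonal blocks
\[
A_{11} \quad\text{and}\quad \begin{bmatrix} A_{22} & x[\overline{\l k\r}] \\ y[\overline{\l k\r}]^T & a \end{bmatrix},
\]
both entrywise negative thanks to the sign hypotheses on $A_{11},A_{22},x[\overline{\l k\r}],y[\overline{\l k\r}],a$, while its off-diagonal blocks $\begin{bmatrix} A_{12} & x[\l k\r] \end{bmatrix}$ and $\begin{bmatrix} A_{21} \\ y[\l k\r]^T \end{bmatrix}$ are entrywise positive; this matches \eqref{1steqn}, placing $U$ in the first category.

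For (i)$\Rightarrow$(ii), Theorem \ref{Ntest} (or property \textbf{[N4]}) immediately produces $a<0$ and the P-matrix property of $A-\tfrac{1}{a}xy^T$. To extract the signs of $x,y$, I would apply Theorem \ref{persimilarity} to $U$, obtaining a permutation $Q$ that renders $QUQ^T$ in the block form \eqref{1steqn}. Because $A_{12},A_{21}>0$, every pair $(i,j)$ with $i\in\l k\r$ and $j\in\overline{\l k\r}$ has $U_{ij}>0$, so $i$ and $j$ must lie in opposite blocks of $QUQ^T$; hence $\l k\r$ and $\overline{\l k\r}$ are each contained in a single block. Reading off the sign conditions on the rows and columns of $U$ corresponding to $x$ and $y$ should then produce the pattern in (ii).

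The main obstacle will be pinning down the placement of index $n+1$ in the decomposition produced by Theorem \ref{persimilarity}: the two a priori candidate placements (joining $\l k\r$ versus joining $\overline{\l k\r}$) would yield opposite sign patterns on $x[\l k\r]$ against $x[\overline{\l k\r}]$, and analogously for $y$. The argument must single out the placement in which $n+1$ joins the block containing $\overline{\l k\r}$ --- presumably by appealing to the block-size convention implicit in the corollary (blocks of size $k$ and $n+1-k$) that pairs with the intended recursion \NCONF; once this is fixed, the sign-chase yielding $x[\l k\r],y[\l k\r]>0$ and $x[\overline{\l k\r}],y[\overline{\l k\r}]<0$ is routine.
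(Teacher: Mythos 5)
Your direction (ii)$\Rightarrow$(i) is correct and is exactly the intended reduction: apply Theorem \ref{Ntest} to $U$ with the singleton $\alpha$ being the bordered index (after the same harmless permutation used in the proof of Theorem \ref{Ntest}), noting $U[\alpha]=[a]$, $U[\bar\alpha]=A$ and $U/U[\alpha]=A-\frac{1}{a}xy^T$; the first-category certification needs none of your block bookkeeping, since under (ii) $U$ already has a positive entry (e.g.\ any entry of $x[\l k\r]$), which is all the definition requires.

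The obstacle you flagged in (i)$\Rightarrow$(ii) is a genuine gap, and it cannot be closed by any ``block-size convention'': nothing in (i) forces the index $n+1$ into the class of $\bar{\l k\r}$. Concretely, take $A=\begin{bmatrix}-1&2\\2&-1\end{bmatrix}$ (so $k=1$), $x=(-1,2)^T$, $y=(-2,2)^T$, $a=-1$. Then $U=\begin{bmatrix}-1&2&-1\\2&-1&2\\-2&2&-1\end{bmatrix}$ is obtained from the matrix $A_3$ of the first example in Section \ref{con} by swapping its first two rows and columns, hence is an N-matrix of the first category, and $A-\frac{1}{a}xy^T=\begin{bmatrix}1&0\\-2&3\end{bmatrix}$ is a P-matrix; yet $x[\l 1\r],y[\l 1\r]<0$ and $x[\bar{\l 1\r}],y[\bar{\l 1\r}]>0$, the opposite of (ii). What your tools actually yield (Theorem \ref{persimilarity} applied to $U$, plus the minors $\det U[\{i,n+1\}]=a_{ii}a-x_iy_i<0$, which give $x_iy_i>a_{ii}a>0$) is only the disjunction: $x$ and $y$ agree in sign entrywise, the sign is constant on $\l k\r$ and on $\bar{\l k\r}$, and opposite across; i.e.\ either the pattern in (ii) or the block-swapped pattern holds. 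To land on (ii) one must additionally normalize by the permutation similarity of $U$ that interchanges the roles of the two diagonal blocks of $A$ --- harmless for the construction theorem, whose converse is claimed only up to permutation similarity, but it means the literal equivalence, and hence your sign-chase, does not go through as stated when $0<k<n$. Either prove the disjunction and record the normalization explicitly, or restrict the forward implication accordingly.
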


Using Corollary \ref{Const_N_1}, we can construct N-matrices 
of the first category as follows.

ALGORITHM $\NCONF$
\begin{enumerate}
	\item Construct $A_k=A_{11}$ using algorithm $\NCONS$
	\item For $i=k:{n-1}$, given the $i \times i$ matrix $A_i$,
	\begin{itemize}
		\item[(a)] choose $a_i<0$, $x^{(i)}, y^{(i)}\in \mathbb{R}^i$ 
		such that $x[\l k\r], y[\l k\r]>0,$ 
		$x[\bar{\l k\r}], y[\bar{\l k\r}]<0$, and 
		$A_i - \frac{1}{a_i}x^{(i)} {y^{(i)}}^T$ is a P-matrix
		\item[(b)] construct the ${(i+1)} \times {(i+1)}$ matrix 
		$A_{i+1}=\begin{bmatrix}
		A_i & x^{(i)}\\ {y^{(i)}}^T & a_i
		\end{bmatrix}$
	\end{itemize}
	\item Output ``$A=A_n$ is an N-matrix of the first category''
\end{enumerate}

\begin{theorem}
	Every matrix constructed by $\NCONF$ is an N-matrix of the first category. 
	Conversely, every N-matrix of the first category can be constructed as a permutational similarity of a matrix constructed by $\NCONF$.
\end{theorem}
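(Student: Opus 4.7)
The plan is to mirror the proof of the companion theorem for second-category N-matrices, using Corollary \ref{Const_N_1} as the inductive engine in place of Corollary \ref{Const_N_2}. The argument splits into a forward implication (every $A_n$ output by $\NCONF$ is a first-category N-matrix) and a converse (every first-category N-matrix equals, up to a permutational similarity, such an $A_n$).

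For the forward implication I would induct on $i = k, k+1, \ldots, n$ to show simultaneously that each $A_i$ produced by $\NCONF$ is (a) an N-matrix and (b) sits in the block form required by Corollary \ref{Const_N_1}: its leading $k \times k$ principal submatrix equals $A_k = A_{11}$ (entrywise negative), its trailing $(i-k) \times (i-k)$ principal submatrix is entrywise negative, and the two off-diagonal blocks with respect to this $(k, i-k)$ partition are entrywise positive. The base case $i=k$ reduces to the already-established $\NCONS$ theorem, which guarantees that $A_k$ is a second-category N-matrix; here the block structure is trivial, since the $A_{22}, A_{12}, A_{21}$ blocks are vacuous. For the inductive step, the sign conditions imposed in step 2(a) of $\NCONF$, combined with the block structure of $A_i$, verify hypothesis (ii) of Corollary \ref{Const_N_1}; the implication (ii)$\Rightarrow$(i) of that corollary then yields that $A_{i+1}$ is a first-category N-matrix, while the required block structure for $A_{i+1}$ is immediate from the prescribed sign pattern of the newly appended row, column, and diagonal entry.

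For the converse, let $A \in \mnr$ be an arbitrary first-category N-matrix. First invoke Theorem \ref{persimilarity} to find a permutation matrix $P$ such that $PAP^T$ has the block form \eqref{1steqn} with $A_{11} \in \RR^{k \times k}$; it then suffices to exhibit $PAP^T$ itself as an output of $\NCONF$. For step 1, observe that $A_{11}$, being a principal submatrix of the N-matrix $PAP^T$ whose entries are all negative, is a second-category N-matrix and hence constructible by $\NCONS$. For the loop, take $A_i$ to be the $i \times i$ leading principal submatrix of $PAP^T$ and read off $a_i, x^{(i)}, y^{(i)}$ from the $(i+1)$-th row and column of $PAP^T$. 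The block pattern of $PAP^T$ immediately delivers the sign conditions of step 2(a), while property \textbf{[N4]} applied to the N-matrix $A_{i+1}$ (itself a principal submatrix of $PAP^T$) yields that the Schur complement $A_i - a_i^{-1} x^{(i)}{y^{(i)}}^T = A_{i+1}/A_{i+1}[\{i+1\}]$ is a P-matrix, closing the required conditions.

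The main obstacle I anticipate is the bookkeeping of the block structure across the inductive step—keeping the ``$A_{11}$ of size $k$'' role pinned to the top-left corner so that Corollary \ref{Const_N_1} can be reapplied at each iteration with $k$ held fixed and the sign pattern correctly propagated to each new row and column. Once this is handled, the theorem follows by direct appeal to Corollary \ref{Const_N_1}, Theorem \ref{persimilarity}, property \textbf{[N4]}, and the previously established $\NCONS$ theorem.
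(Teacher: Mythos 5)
Your proposal is correct and follows essentially the same route as the paper: the forward direction is the (ii)$\Rightarrow$(i) implication of Corollary \ref{Const_N_1} (with the block structure of the intermediate $A_i$ tracked explicitly), and the converse combines Theorem \ref{persimilarity} with peeling off the last row and column of $PAP^T$, exactly as the paper does by induction on the order. The only cosmetic difference is that you unroll the recursion directly and certify the Schur-complement condition via \textbf{[N4]} rather than via the (i)$\Rightarrow$(ii) direction of Corollary \ref{Const_N_1}, which in fact handles more transparently the point that the trailing principal submatrix $A_{n-1}$ of $PAP^T$ is already in the canonical form (all negative, or of the form \eqref{1steqn}) and so needs no further permutation.
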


\begin{proof}
	By Corollary \ref{Const_N_1}, the sequence of matrices $A_{i+1}~(i=1,\ldots, {n-1})$ 
	constructed by $\NCONF$, are N-matrices of the first category. We use induction on the 
	order of matrices to prove the converse. The base case $n=2$ is obvious. Let $n>2$ and 
	suppose that every N-matrix of the first category of order smaller than $n$ can be 
	constructed as a permutational similarity of a matrix constructed by $\NCONF$. Let 
	$A \in \mnr$ be an N-matrix of the first category. Then there exist a permutation 
	matrix $P$ such that
	\begin{center}
		$PAP^T=\begin{bmatrix}
		A_{11} & A_{12}\\A_{21} & A_{22}
		\end{bmatrix},$
	\end{center}
where $A_{11}\in \mathbb{R}^{k\times k} ~(k<n)$, $A_{22}\in \mathbb{R}^{{n-k}\times {n-k}}$ with $A_{11}, A_{22}<0$ and $A_{12},~A_{21}>0$. Let $\begin{bmatrix}
A_{11} & A_{12}\\A_{21} & A_{22}
\end{bmatrix}=\begin{bmatrix}
A_{n-1} & u\\v^T & a
\end{bmatrix}$, where $A_{n-1} \in \mathbb{R}^{{n-1}\times {n-1}}$ is N-matrix, $a<0$, 
$u,v \in \mathbb{R}^{n-1}$ with $u[\l k\r], v[\l k\r]>0$ and 
$u[\bar{\l k\r}], v[\bar{\l k\r}]<0$. Now, either $A_{n-1}<0$ or $A_{n-1}$ is of the form \eqref{1steqn}. 
By inductive hypothesis, $A_{n-1}$ can be constructed using $\NCONF$. 
Since $A$ is N-matrix of the first category, by Corollary \ref{Const_N_1}, 
$A_{n-1}/a=A_{n-1}-\frac{1}{a}uv^T$ is a P-matrix. Thus $A_n=P^TAP$ can be 
constructed by $\NCONF$ with the following choices:
	\begin{center}
		$a_{n-1}=a,~x^{(n-1)}=u \hbox{ and }y^{(n-1)}=v$.
	\end{center}
\end{proof}

\begin{remark} $\!$
	
	{\em (1) The implementation of step 2(a) in algorithms
		$\NCONF$ and $\NCONS$ can be done via random choice of the 
		appropriately signed vectors $x^{(i)}$ and $y^{(i)}$ and judicious 
		choice of the diagonal entries $a_i$. The process of choosing $a_i$ so that 
		$\displaystyle{A_i - \frac{1}{a_i}x^{(i)} {y^{(i)}}^T}$ 
		is a P-matrix is developed and its effects explained
		in the recursive construction of all P-matrices presented in
		\cite[Section 4]{tz}.
	
	    (2) In light of [N1] in Section \ref{back}, Algorithms $\NCONF$ and $\NCONS$ may 
	    also be viewed as methods to construct almost P-matrices via inversion. 
             }
\end{remark}

We proceed with two illustrative examples of N-matrices constructed using
$\NCONF$ and $\NCONS$. 

\begin{ex}
	{\rm 
		We construct $3 \times 3$ N-matrix of the first category.
		Let $A_1=[-1]$, $a_1=-1,~x^{(1)}=[2]$ and $y^{(1)}=[2]$. 
		Then $A_1-\frac{1}{a_1}x^{(1)} {y^{(1)}}^T=[3]$ is a P-matrix. 
		By $\NCONF$, $A_2=\begin{bmatrix}
		-1 & 2\\2 &-1
		\end{bmatrix}$ is N-matrix of the first category. Now, let 
		$a_2=-1, ~x^{(2)}=[2\;\; -1]^T$ and $y^{(2)}=[2 \;\; -2]^T$. 
		Then $A_2-\frac{1}{a_2}x^{(2)} {y^{(2)}}^T=\begin{bmatrix}
		3 & -2\\ 0 & 1
		\end{bmatrix}$ is a P-matrix. Again, by $\NCONF$, $A_3=\begin{bmatrix}
		-1 & 2 & 2\\2 &-1 & -1 \\2 & -2 & -1
		\end{bmatrix}$ is N-matrix of the first category.
	}
\end{ex}

\begin{ex}
{\rm
In this example, we construct a $3 \times 3$ N-matrix of the second category 
by $\NCONS$.
Let $A_1=[-1]$, $a_1=-1,~x^{(1)}=[-1]$ and $y^{(1)}=[-2]$. 
Then $A_1-\frac{1}{a_1}x^{(1)} {y^{(1)}}^T=[1]$ is a P-matrix. 
By $\NCONS$, $A_2=\begin{bmatrix}
	-1 & -1\\-2 &-1
	\end{bmatrix}$ is N-matrix of the second category. 
	Now, we take $a_2=-1,~x^{(2)}=[-2\;\; -1]^T$ and 
	$y^{(2)}=[-3\;\; -2]^T$. 
	Then $A_2-\frac{1}{a_2}x^{(2)} {y^{(2)}}^T=\begin{bmatrix}
	5 & 3\\ 1 & 1
	\end{bmatrix}$ is a P-matrix. Hence, by $\NCONS$, $A_3=\begin{bmatrix}
	-1 & -1 & -2\\-2 &-1 & -1 \\-3 & -2 & -1
	\end{bmatrix}$ is N-matrix of the second category.
} 
\end{ex}
 
\section{NTEST and PTEST}
\label{matlab}

We include Matlab code for the detection of P-matrices and N-matrices.

{\bf PTEST} (detects P-matrices)\\
{\tt \noindent
	function [r] = ptest(A)\\
	\% Return r=1 if `A' is a P-matrix (r=0 otherwise). 
	
	n = length(A);\\
	if $\sim$(A(1,1)>0), r = 0;\\
	elseif n==1, r = 1;   \\
	else\\
	\hspace*{.1in}   b = A(2:n,2:n);\\
	\hspace*{.1in}   d = A(2:n,1)/A(1,1);\\
	\hspace*{.1in}   c = b - d*A(1,2:n);  \\
	\hspace*{.1in}   r = ptest(b) \& ptest(c);\\
	end
}

\medskip
{\bf NTEST} (detects N-matrices)\\
{\tt \noindent
	function [r] = ntest(A)\\
	\% Return r=1 if `A' is a N-matrix (r=0 otherwise). 
	
	n = length(A);\\
	if $\sim$(A(1,1)<0), r = 0;\\
	elseif n==1, r = 1;   \\
	else\\
	\hspace*{.1in}   b = A(2:n,2:n);\\
	\hspace*{.1in}   d = A(2:n,1)/A(1,1);\\
	\hspace*{.1in}   c = b - d*A(1,2:n);  \\
	\hspace*{.1in}   r = ntest(b) \& ptest(c);\\
	end
}

Note that the time complexity of PTEST is $O(2^n)$ \cite{TsatLei}, and 
so this must also be the case for NTEST as the same binary tree of 
matrices (of orders recursively reduced by $1$) is being
created by the two algorithms. 

\section{Conclusions}
\label{conclusions}

N-matrices are challenging to detect and understand their nature. The progress reported
allows for their further consideration in applications. One can now detect 
such matrices by an algorithm that because of its recursive nature can be implemented in
parallel. One can also construct generic N-matrices (of either category) for purposes 
of experimentation and development of new theory and algorithms. Moreover, the work 
herein contributes in better understanding the role of the signs of the 
principal minors in the theory of inequalities and in the study of computational complexity (generally and within the confines of complementarity problems).   


\end{document}